\newtheorem{thm}{Theorem}[]
\newtheorem*{thm*}{Theorem}
\newtheorem{lem}[thm]{Lemma}
\newtheorem{prop}[thm]{Proposition}
\newtheorem{cor}[thm]{Corollary}
\newtheorem{rem}[thm]{Remark}
\newtheorem{defn}[thm]{Definition}
\newcommand{\param}{{\mathchoice{\mkern1mu\mbox{\raise2.2pt\hbox{$
\centerdot$}}
\mkern1mu}{\mkern1mu\mbox{\raise2.2pt\hbox{$\centerdot$}}\mkern1mu}{
\mkern1.5mu\centerdot\mkern1.5mu}{\mkern1.5mu\centerdot\mkern1.5mu}}}
\renewcommand{\setminus}{{\smallsetminus}}
\begin{document}
\title{An equivariant deformation retraction of the Thurston spine}
\author{Ingrid Irmer}
\address{SUSTech International Center for Mathematics\\
Southern University of Science and Technology\\Shenzhen, China
}
\address{Department of Mathematics\\
Southern University of Science and Technology\\Shenzhen, China
}
\email{ingridmary@sustech.edu.cn}


\begin{abstract}
This paper shows that there is a mapping class group-equivariant deformation retraction of the Teichm\"uller space of a closed, orientable surface onto a cell complex of dimension equal to the virtual cohomological dimension of the mapping class group. The image of the deformation retraction is contained in the CW complex first described by Thurston -- the Thurston spine. The Thurston spine is the set of points in Teichm\"uller space corresponding to hyperbolic surfaces for which the set of shortest geodesics (the systoles) cuts the surface into polygons. 
\end{abstract}

\maketitle

{\footnotesize
\tableofcontents
}

\section{Introduction}
\label{sect:intro}

In this paper it will be shown that the Teichm\"uller space of a closed, orientable surface of genus $g\geq 2$ has a mapping class group-equivariant deformation retraction onto a CW complex of dimension equal to the virtual cohomological dimension of the mapping class group, namely $4g-5$, see \cite{Harer}. \\

In genus 2, the existence of a mapping class group-equivariant deformation retraction onto a CW complex of dimension 3 follows from a computation by Schmutz Schaller, see Theorem 44 of \cite{SchmutzMorse}. This also follows from a SageMath calculation \cite{Calculation}, using Rivin's angle coordinates on a sphere with six cone points. When the genus is greater than or equal to 2 and the surface has at least one puncture, a mapping class group-equivariant deformation retraction to a CW complex of dimension equal to the virtual cohomological dimension of the mapping class group was given in \cite{Harer} and \cite{PennerComplex}. This result has had a number of applications; for example, it was used in \cite{Kontsevich} to prove a conjecture of Witten's about intersection theory on the moduli space of Riemann surfaces with punctures. In the absence of punctures, determining whether a deformation retraction onto a CW complex of dimension equal to the virtual cohomological dimension actually exists is listed as the first open question in \cite{BV}. The interested reader is referred to \cite{Bestvina2}, \cite{BV} and Chapter 3.3 of \cite{Ha} for a survey of the background and applications of this question. \\

Thurston's construction in \cite{Thurston} (see \cite{Ha}, \cite{Me2023}, \cite{MorseSmale}, \cite{PT} for more details) resolved the problem of a missing basepoint by using length functions to parametrise Teichm\"uller space, constructing a mapping class group-equivariant deformation retraction of the Teichm\"uller space of a closed, compact surface. The image of this deformation retraction is the so-called \textit{Thurston spine} $\mathcal{P}_{g}$. This is a CW-complex contained in $\mathcal{T}_{g}$ consisting of the set of points representing hyperbolic surfaces that are cut into polygons by the set of shortest geodesics (also known as the systoles). \\

\begin{thm}
\label{dimensionthm}
There is a mapping class group-equivariant deformation retraction of the Thurston spine of a closed orientable surface of genus $g$ onto a CW-complex of dimension equal to $4g-5$.
\end{thm}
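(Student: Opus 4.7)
The plan is to construct a mapping class group-equivariant deformation retraction of $\mathcal{P}_{g}$ onto a canonical subcomplex defined by the combinatorics of the systolic multicurve. I would first exploit the natural stratification of $\mathcal{P}_{g}$: to each filling multicurve $\gamma$ with $k$ components is associated the stratum $\mathcal{S}_{\gamma}$ of hyperbolic structures whose systoles are exactly the components of $\gamma$. This stratum lies inside the slice of $\mathcal{T}_{g}$ cut out by the $(k-1)$ length equalities $\ell(\gamma_{1}) = \cdots = \ell(\gamma_{k})$, and generically has dimension $6g-6-(k-1) = 6g-5-k$. The inequality $\dim \mathcal{S}_{\gamma} \leq 4g-5$ is equivalent to $k \geq 2g$, so the natural candidate for the target sub-complex $\mathcal{P}_{g}^{\geq 2g}$ is the union of all strata whose systolic multicurve has at least $2g$ components.

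Next, on a stratum $\mathcal{S}_{\gamma}$ with $k < 2g$ systoles, I would produce a canonical tangent vector along which some non-systole length strictly decreases while the systole-length equalities are preserved. This is a linear algebra question in the cotangent space: using Wolpert's first-variation formula for geodesic length, the $(k-1)$ length-equality constraints cut out a tangent subspace of dimension $6g-5-k$, and when $k < 2g$ this is strictly greater than $4g-5$, leaving enough room to find a direction that strictly decreases at least one non-systole length function. Integrating this vector field, some non-systole length eventually equalizes with the systole length, the flow enters a stratum with strictly more systoles, and the procedure iterates until $k \geq 2g$.

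The main obstacle is two-fold. Most importantly, the tangent vector must be chosen in a continuous, mapping class group-equivariant way across strata of very different codimension; I anticipate using an averaging construction over the finite set of non-systole curves that realize the infimum of non-systole length, combined with a partition of unity subordinate to the stratification, to produce the required canonical flow. Secondarily, one must verify that $\mathcal{P}_{g}^{\geq 2g}$ is genuinely a CW-subcomplex of dimension \emph{exactly} $4g-5$, meaning that strata with precisely $k = 2g$ systoles exist and are reachable under the flow. The modification of Thurston's original cell-structure claim flagged in the abstract is likely to enter at exactly this bookkeeping step, and controlling the combinatorics of stratum adjacencies as the flow jumps between strata will probably form the technical core of the argument.
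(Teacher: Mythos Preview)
Your approach differs fundamentally from the paper's, and the central step has a real gap. You argue that on a stratum with $k<2g$ systoles the tangent space to the equality locus has dimension $6g-5-k>4g-5$, ``leaving enough room'' to find a direction decreasing some non-systole length. But a dimension count is not an existence proof for the flow you need. What must actually be shown is that the gap function $g_{\mathrm{sys}}$ (difference between the shortest non-systole length and the systole length) can be strictly decreased along a direction tangent to the stratum, \emph{at every point} of the stratum, and that the resulting flow converges to $\mathcal{P}_g^{\geq 2g}$ rather than stalling at an interior critical point of $g_{\mathrm{sys}}|_{\mathcal{S}_\gamma}$ or wandering indefinitely. Nothing in your outline rules out such interior critical points; the analogous statement for $f_{\mathrm{sys}}$ on $\mathcal{P}_g$ (Theorem~\ref{noboguspoints}) is a nontrivial result, and you would need a version of it for $g_{\mathrm{sys}}$ restricted to each stratum. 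Separately, your target bound $\dim\mathcal{P}_g^{\geq 2g}\le 4g-5$ presupposes that the $k-1$ differences $\nabla L(c_i)-\nabla L(c_1)$ are everywhere linearly independent on every stratum with $k\ge 2g$; this is plausible generically but is neither stated nor argued.

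The paper takes a completely different route that sidesteps these difficulties. Rather than flowing to increase the number of systoles, it first uses Theorem~\ref{noboguspoints} to retract $\mathcal{P}_g$ along an $f_{\mathrm{sys}}$-increasing flow onto a Morse--Smale-type complex $\mathcal{P}_g^X$ built from unstable manifolds of the critical points of $f_{\mathrm{sys}}$. The dimension reduction then comes from a \emph{homological} argument: for a top-dimensional unstable manifold $\mathcal{M}(p)$ and an interior point $q$, the preimage ball $\mathcal{B}(q)$ under Thurston's retraction meets $\partial\mathcal{T}_g^{\epsilon_M}$ in a sphere of dimension one less than the codimension of $\mathcal{M}(p)$. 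Since $\partial\mathcal{T}_g^{\epsilon_M}$ is $\Gamma_g$-equivariantly homotopy equivalent to a wedge of $(2g-2)$-spheres, this sphere is null-homotopic whenever $\dim\mathcal{M}(p)>4g-5$, exhibiting free boundary of $\mathcal{P}_g^X$ through which an equivariant cell-cancellation can be performed. Equivariance is handled by working with fundamental domains for the finite stabilisers $G(p)$, using that fixed-point sets of finite subgroups are transverse to level sets of $f_{\mathrm{sys}}$ away from critical points. The number $4g-5$ thus enters via the homotopy type of $\partial\mathcal{T}_g^{\epsilon_M}$ (equivalently, the Steinberg module), not via a count of systoles.
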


Together with the construction from \cite{Thurston} this gives 

\begin{thm}
\label{dimensionthm2}
There is a mapping class group-equivariant deformation retraction of the Teichm\"uller space of a closed orientable surface of genus $g$ onto a CW complex of dimension equal to $4g-5$.
\end{thm}

For large genus, the reduction in dimension achieved by the deformation retraction from Theorem \ref{dimensionthm} can be significant, as there are examples known for which the codimension of the Thurston spine is small relative to the genus of the surface. This follows from the construction in \cite{Maxime} of small index critical points of the topological Morse function $f_{\mathrm{sys}}:\mathcal{T}_{g}\rightarrow \mathbb{R}$, mapping a point $x$ of $\mathcal{T}_{g}$ to the length of the systoles at $x$. Any spine constructed as in \cite{Thurston} must contain the ``unstable manifolds'' of critical points of  the systole function $f_{\mathrm{sys}}$, so the index of a critical point gives an upper bound on the codimension of $\mathcal{P}_{g}$. This is explained in Section 5 of \cite{MorseSmale}.\\

The Teichm\"uller space $\mathcal{T}_{g}$ is contractible, and by Fricke's theorem, the mapping class group acts properly discontinuously on it. Studying mapping class group-equivariant deformation retractions of Teichm\"uller space is intimately connected with questions about the virtual cohomological dimension of the mapping class group and about the problem of finding a space of the lowest possible dimension on which the mapping class group acts properly discontinuously. The virtual cohomological dimension gives a lower bound on this dimension. Theorem \ref{dimensionthm2} shows that this lower bound is achieved.\\

\textbf{The Steinberg module and the Thurston Spine.}
The ``thick'' part, $\mathcal{T}_{g}^{\epsilon_{M}}$, of $\mathcal{T}_{g}$ is defined to be the set of all points of $\mathcal{T}_{g}$ corresponding to surfaces with injectivity radius greater than or equal to the Margulis constant, denoted by $\epsilon_{M}\simeq 0.2629$ \cite{MargulisConstant}. The Margulis constant has many important geometric and algebraic properties; for example, when $f_{\mathrm{sys}}$ is less than or equal to $\epsilon_{M}$, it follows from the Margulis lemma that the systoles are pairwise disjoint. It was shown in \cite{Harer} that Harvey's complex of curves $\mathcal{C}_{g}$ is homotopy equivalent to an infinite wedge of spheres of dimension $2g-2$. It is known, \cite{Ivanov2002}, that $\partial\mathcal{T}_{g}^{\epsilon_{M}}$ is mapping class group-equivariantly homotopy equivalent to $\mathcal{C}_{g}$.\\

The Steinberg module of the mapping class group is defined to be the homology group $H_{2g-2}\bigl(\mathcal{C}_{g};\mathbb{Z}\bigr)$. As there is a simplicial action of the mapping class group on the complex of curves, the Steinberg module inherits the structure of a mapping class group-module. \\

Theorem \ref{dimensionthm} is proven by using the homology of $\partial \mathcal{T}_{g}^{\epsilon_{M}}$ to show that certain cells dual to the spine --- essentially the cells defined by Schmutz Schaller in \cite{SchmutzMorse} --- must be homotopic into $\partial \mathcal{T}_{g}^{\epsilon_{M}}$ relative to their intersection with $\partial \mathcal{T}_{g}^{\epsilon_{M}}$. When the dimension of $\mathcal{P}_{g}$ is greater than $4g-5$, this shows the existence of a boundary of $\mathcal{P}_{g}$, used as a starting point for a deformation retraction. This deformation retraction can be done equivariantly, because the mapping class group preserves the level sets of $f_{\mathrm{sys}}$, and the intersection of the fixed point sets of the mapping class group with $\mathcal{P}_{g}$ are critical points or are transverse to the level sets of $f_{\mathrm{sys}}$.\\

\subsection*{Acknowledgements} 
The author would like to thank Stavros Garoufalidis, Xiaolong Han, Scott Wolpert, Huang Yi and Don Zagier for commenting on myriad earlier drafts of this paper which led to considerable improvements. 
\section{Definitions and conventions}
\label{sub:defns}

This section provides definitions and background used in the rest of the paper. \\


The Teichm\"uller space of a closed, compact, connected topological surface $\mathcal{S}_{g}$ of genus $g\geq 2$ will be denoted by $\mathcal{T}_{g}$, and the mapping class group of $\mathcal{S}_{g}$ by $\Gamma_{g}$. Moduli space $\mathcal{M}_{g}$ is the quotient of $\mathcal{T}_{g}$ by this action.\\

Curves on surfaces are all assumed to be simple, closed, isotopy classes of maps of $S^{1}$ onto the surface. For convenience, the word curve will also refer to the image of a particular representative of the isotopy class such as a geodesic. Curves on a marked hyperbolic surface representing a point in $\mathcal{T}_{g}$ are also assumed to inherit a marking. The length of a curve on a hyperbolic surface is the length of the geodesic representative of the isotopy class. A curve will usually be denoted by a lowercase $c$, perhaps with a subscript, while a finite set of curves will be denoted by an uppercase $C$.\\

A set of curves fills the surface if the complement of the geodesic representatives is a union of polygons. The Thurston spine, $\mathcal{P}_{g}$, is the subset of $\mathcal{P}_{g}$ on which the systoles fill. \\

The systole function $f_{\mathrm{sys}}:\mathcal{T}_{g}\rightarrow \mathbb{R}^{+}$ is the function whose value at $x$ is give by the length of the systoles. A summary of properties of the systole function is as follows:
\begin{enumerate}
\item{It is bounded from below by zero, and from above by Bers' constant, \cite{Bers1985}. Bers' constant depends only on $g$, and is bounded from above by $21(g-1)$, \cite{FandM}.}
\item{It is continuous, and smooth when restricted to a stratum (defined later).}
\item{It is a topological Morse function (defined later).}
\end{enumerate}

The Thurston spine is nonempty as a result of property (1) above and Proposition \ref{Thurstonprop} below. \\

\textbf{Local finiteness.} The set of curves of length less than a given real number is finite, \cite{M}. For any point $x\in \mathcal{T}_{g}$ there is an open neighbourhood of $x$ in which the systoles of $x'$ in the neighbourhood are contained in the set of systoles at $x$. \\

Harvey's curve complex, \cite{Harvey}, is the simplicial complex with $n$-simplices in 1-1 correspondence with sets of $n+1$ homotopically nontrivial, pairwise disjoint, closed curves on $\mathcal{S}_{g}$. Since curves are defined up to isotopy, ``pairwise disjoint'' means geometric intersection number zero. Due to the fact that the action of $\Gamma_{g}$ on curves preserves intersection properties, there is a simplical action of $\Gamma_{g}$ on $\mathcal{C}_{g}$.\\ 

The Weil-Petersson metric on $\mathcal{T}_{g}$ will be assumed when defining gradients, etc. However, in most cases, any other equivariant metric could also be used.\\

\begin{defn}[Length function]
A curve $c$ on $\mathcal{S}_{g}$ determines an analytic function $L(c):\mathcal{T}_{g}\rightarrow \mathbb{R}^{+}$ whose value at a point $x\in \mathcal{T}_{g}$ is equal to the length of $c$ at $x$. For a finite ordered set of curves $C=(c_{1}, \ldots, c_{k})$ and an ordered set of real, positive weights $A=(a_{1}, \ldots, a_{k})$, a length function $L(A,C):\mathcal{T}_{g}\rightarrow \mathbb{R}^{+}$ is a function given by
\begin{equation*}
L(A,C)(x)\,=\, \sum_{j=1}^{k} a_{j}L(c_{j})(x)
\end{equation*}
\end{defn}

A survey of convexity properties of length functions is given in \cite{Bestvina}, for example, they are convex along earthquake paths, strictly convex on Weil-Petersson geodesics and Fenchel-Nielsen coordinates can be chosen such that length functions are convex functions of these coordinates. Another important property of length functions is given by the following.\\

\begin{prop}[Proposition 0.1 of \cite{Thurston}, Lemma 4 of \cite{Bers}]
\label{Thurstonprop}
Suppose $C$ is a set of curves on a surface that do not fill. Then at any point $x$ of $\mathcal{T}_{g}$, there is an open cone of derivations in $T_{x}\mathcal{T}_{g}$ whose evaluation on every $L(c)$ for $c\in C$ is strictly positive. 
\end{prop}

There is a stratification of $\mathcal{T}_{g}$, where a stratum, $\mathrm{Sys}(C)$, is the subset of $\mathcal{T}_{g}$ on which $C$ is the set of systoles for some fixed finite set $C$. Every point lies in a unique stratum, and there is an action of the mapping class group on the strata, with finitely many orbits, \cite{Thurston}. The function $f_{\mathrm{sys}}$ is smooth when restricted to each stratum in $\mathcal{T}_{g}$, but at the boundary of two different strata, one-sided limits of the derivative do not usually match. \\




As $f_{\mathrm{sys}}$ is not smooth, it is not a Morse function, but has many properties in common with a Morse function.

\begin{defn}[Topological Morse function]
Let $M$ be an $n$-dimensional topological manifold, and $f:M\rightarrow \mathbb{R}^{+}$ a continuous function. Then $f$ is a topological Morse function if the points of $M$ consist of regular points and critical points. A regular point $p \in M$ is characterised by the property of having an open neighbourhood $U$, where $U$ admits a homeomorphic parametrisation by $n$ parameters, one of which is $f$. When $p$ is a critical point, there exists a $k\in \mathbb{Z}$, $0\leq k\leq n$, referred to as the index of $p$, and a homeomorphic parametrisation of $U$ by parameters $\{x_{1}, \ldots, x_{n}\}$, such that at every point in $U$, $f$ satisfies
\begin{equation*}
f(x)\,-\,f(p) \;=\ \sum_{i=1}^{i=n-k}x_{i}^{2}\,-\,\sum_{i=n-k+1}^{i=n}x_{i}^{2}
\end{equation*}
\end{defn}

Topological Morse functions were first defined in \cite{Morse}, where it was shown that, when they exist, they can be used in most of the same ways as their smooth analogues for constructing cell decompositions of manifolds and computing homology. Another reference, in which the standard theorems (except existence and deformability of Morse functions) are shown to generalise from the smooth to the topological category is given by Section 3 of Essay III of \cite{Siebenmann}. In \cite{Akrout} it was shown that $f_{\mathrm{sys}}$ is a topological Morse function, proving a conjecture from \cite{SchmutzMorse}.\\


It is a consequence of Lojasiewicz's theorem, \cite{Lojasiewicz1964} that $\mathcal{P}_{g}$ admits a triangulation compatible with the stratification. This means that the interior of any simplex is constained in a single stratum. It follows that $\mathcal{P}_{g}$ has a tangent space almost everywhere. In this paper, $\mathcal{P}_{g}$ will be described as a CW complex or a simplicial complex, depending on what is most convenient at the time. From now on, a fixed triangulation will be assumed.\\

\textbf{Tangent cone and cone of increase.} For a point $p$ in a simplex $T$ in $\mathcal{P}_{g}$, the \textit{tangent cone} to $T$ at $p$ is the set of $v\in T_{p}\mathcal{T}_{g}$ such that $v=\dot{\gamma}(0)$ for a smooth oriented path $\gamma(t)$ with $\gamma(0)=p$ and $\gamma(\epsilon)$ in $T$ for sufficiently small $\epsilon>0$. If $p\in \mathcal{P}_{g}$ is on the boundary of more than one simplex, the tangent cone to $\mathcal{P}_{g}$ at $p$ is the union of the tangent cones of the simplices with $p$ on the boundary. It is possible to define tangent cones to strata by taking a triangulation compatible with the stratification, in which case the tangent cone to the stratum at $p$ is the union of the tangent cones of the simplices with interior contained in the stratum and with $p$ on the boundary. Lojasiewicz's theorem also applies to level sets within strata, which can therefore also be triangulated, and hence have tangent cones. The \textit{unit tangent cone} is the set of vectors in the tangent cone with unit length. Tangent cones were defined similarly in \cite{Thurstonstretch}. \\

The systole function $f_{\mathrm{sys}}$ is only piecewise-smooth, so instead of a gradient it has a cone of increase. By local finiteness, at a point $p\in \mathrm{Sys}(C)$, $f_{\mathrm{sys}}$ is increasing or stationary in a direction iff the lengths of all the curves in $C$ are increasing or stationary in that direction. For $p\in \mathrm{Sys}(C)$, the cone of increase of $f_{\mathrm{sys}}$ at $p$ is defined to be the tangent cone at $p$ of the intersection $$I(C,p):=\{x\in \mathcal{T}_{g}\ |\ L(c)(x)\geq f_{\mathrm{sys}}(p)\ \forall c\in C\}$$. The cone of increase is full at $p$ if it has dimension at $p$ equal to the dimension of $T_{p}\mathcal{T}_{g}$. \\
 
\textbf{Boundary points and critical points of }$\mathbf{f}_{\mathbf{\mathrm{sys}}}$.  On a neighbourhood of a regular point of $f_{\mathrm{sys}}$, the level sets are only homeomorphic to codimension 1 submanifolds. As pointed out in \cite{SchmutzMorse}, it is theoretically possible that there exist so-called \textit{boundary points} of $f_{\mathrm{sys}}$; regular points at which the cone of increase has dimension less than that of $\mathcal{T}_{g}$. No examples  of these are known. As shown in \cite{SchmutzMorse}, both critical points and boundary points of $f_{\mathrm{sys}}$ are isolated. Moreover, the characterisation of boundary points of $f_{\mathrm{sys}}$ in \cite{SchmutzMorse} implies that all boundary points are contained in $\mathcal{P}_{g}$. By Proposition \ref{Thurstonprop}, the same is true of critical points of $f_{\mathrm{sys}}$.\\




\begin{rem}[Interpreting figures]
As it is not possible to draw objects in 6 dimensions and more, figures in this paper should be interpreted as illustrations of lower dimensional analogues of the objects defined and phenomena under discussion.
\end{rem}

\begin{figure}[h!]
\centering
\includegraphics[width=0.6\textwidth]{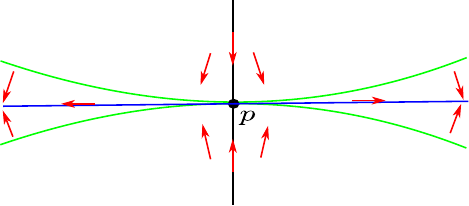}
\caption{The black line represents the subspace of $T_{p}\mathcal{T}_{g}$ spanned by $\{\nabla L(c)\ |\ c\in C\}$. The level sets of $f_{\mathrm{sys}}$ passing through the critical point $p$ are drawn in green. The arrows show the direction in which $f_{\mathrm{sys}}$ is increasing. The simplices $\sigma_{1}$ and $\sigma_{2}$ in $\mathcal{P}_{g}$ guaranteed by Corollary \ref{cor} are shown in blue.}
\label{localstructurefig}
\end{figure}

\begin{cor}
\label{cor}
At critical points and boundary points of $f_{\mathrm{sys}}$, the cone of increase of $f_{\mathrm{sys}}$ is contained in the tangent cone to $\mathcal{P}_{g}$. At a critical point $p$ with set of systoles $C$, the cone of increase of $f_{\mathrm{sys}}$ is given by $\{\nabla L(c)(p)\ |\ c\in C\}^{\perp}$. 
\end{cor}

Note that $\{\nabla L(c)(p)\ |\ c\in C\}^{\perp}$ is independent of the choice of metric, as it can be described as the tangent cone to intersections of level sets of $\{ L(c)(p)\ |\ c\in C\}$.\\

\begin{proof}
As discussed in Section 5 of \cite{MorseSmale}, this is a consequence of the model for critical points and boundary points of $f_{\mathrm{sys}}$ resulting from the work of Schmutz Schaller and Akrout. 
\end{proof}

By local finiteness and corollary \ref{cor}, at a critical point, the cone of increase only contains directions in which $f_{\mathrm{sys}}$ is increasing to second or higher order. On a neighbourhood of a critical point of $f_{\mathrm{sys}}$, the level sets are homeomorphic to the level sets on a neighbourhood of a critical point of a smooth Morse function of the same index. This is shown in Figure \ref{localstructurefig} for a toy example with two systoles. 

\begin{figure}[!thpb]
\centering
\includegraphics[width=0.2\textwidth]{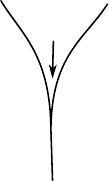}
\caption{Where $\mathcal{P}_{g}$ splits apart, Theorem 2 of \cite{MorseSmale} implies $f_{\mathrm{sys}}$ is increasing towards the split. The arrow shows the direction of increase of $f_{\mathrm{sys}}$.}
\label{ruledout}
\end{figure}

\begin{thm}[Theorem 1 of \cite{MorseSmale}]
\label{thm1}
Let $p\in \mathcal{P}_{g}$ be a point at which the cone of increase of $f_{\mathrm{sys}}$ is full (i.e. $p$ is not a critical or boundary point of $f_{\mathrm{sys}}$). Then the intersection of the cone of increase at $p$ with the tangent cone of $\mathcal{P}_{g}$ at $p$ is nonempty.
\end{thm}

A point $x\in \mathcal{P}_{g}$ is defined to be balanced if $x$ is either a critical point or there is a direction in the cone of increase at $x$ in the convex hull of the gradients of the lengths of the systoles.

\begin{thm}[Theorem 2 of \cite{MorseSmale}]
\label{thm2}
Where a stratum of $\mathcal{P}_{g}$ is locally top-dimensional it is balanced. If $x\in \mathcal{P}_{g}$ is not balanced, there are points directly above $x$ in $\mathcal{P}_{g}$ that are balanced.
\end{thm}

Theorem \ref{thm2} is used to describe the behaviour of $f_{\mathrm{sys}}$ on a neighbourhood of $\mathcal{P}_{g}$, and was shown to hold independently of the choice of equivariant metric. The purpose of this theorem is to capture Thurston's intuition that $f_{\mathrm{sys}}$ is increasing towards $\mathcal{P}_{g}$. At every point at which $\mathcal{P}_{g}$ has a tangent space, with respect to every choice of $\Gamma_{g}$-equivariant metric, this theorem implies that $f_{\mathrm{sys}}$ is decreasing or stationary in an outward-pointing normal direction to $\mathcal{P}_{g}$. Informally speaking, this rules out the possibility of having a disconnected set of directions in the tangent cone of $\mathcal{P}_g$ in which $f_{\mathrm{sys}}$ is increasing, as illustrated in Figure \ref{ruledout}. \\

The cohomological dimension of a group $G$ is
\begin{equation*}
\sup\bigl\{n\in \mathbb{N} \,|\,H^{n}(G, M)\neq 0 \text{ for some module } M\bigr\}.
\end{equation*}

The mapping class group is known to contain finite index torsion-free subgroups; a discussion is given in Chapter 6 of \cite{FandM}. Cohomological dimension is not an exciting invariant of groups with torsion, as it is known that it is always infinite in this case, \cite{Brown}. By Serre's theorem, \cite{Brown} Chapter VIII, any finite index torsion-free subgroup of a group has the same cohomological dimension. The cohomological dimension of a (and hence any) finite index torsion free subgroup is then called the virtual cohomological dimension. In the case of $\Gamma_{g}$, the virtual cohomological dimension was shown in \cite{Harer} to be $4g-5$.

\section{The deformation retraction of the Thurston spine}
\label{dimensioncalculation}

This section shows that there is a $\Gamma_{g}$-equivariant deformation retraction of the Thurston spine onto a CW-complex of dimension equal to the virtual cohomological dimension of $\Gamma_{g}$. The exposition begins with a brief survey of properties of fixed point sets of the action of subgroups of $\Gamma_{g}$ on $\mathcal{T}_{g}$ in Subsection \ref{sub}. Following this, an equivariant deformation retraction of $\mathcal{P}_{g}$ onto a complex $\mathcal{P}_{g}^{X}$ is constructed in Subsection \ref{pgx} and a definition of stable and unstable manifolds is given. Conceptually, this is a familiar construction from differential topology, however there are a few technical issues discussed in detail that arise from the fact that the construction is performed on a simplical complex rather than on a manifold, and the Morse function is not smooth. One of these is the possibility of boundary points of $f_{\mathrm{sys}}$. If boundary points of $f_{\mathrm{sys}}$ exist, this makes it necessary to alter the vector field $X$, as explained in Subsection \ref{boundarypoints}. In Subsection \ref{unmatchedfaces}, the topology of $\partial\mathcal{T}_{g}^{\epsilon_{M}}$ is used to show that when $\mathcal{P}_{g}^{X}$ has dimension greater than $4g-5$, $\mathcal{P}_{g}^{X}$ necessarily has a boundary. This boundary is used as the starting point of an equivariant deformation retraction. As explained in Subsection \ref{construction}, this deformation retraction is obtained as a composition of orbits of simpler deformation retractions, defined using the structure of unstable manifolds and the properties of flowlines and fixed point sets.\\


\subsection{Fixed point sets}
\label{sub}

By Fricke's theorem, $\Gamma_{g}$ acts properly discontinuously on $\mathcal{T}_{g}$. The stabiliser of any point of $\mathcal{T}_{g}$ is therefore finite. Moreover, it is known that every finite subgroup of $\Gamma_{g}$ has a nonempty fixed point set, \cite{NRP}. A fixed point set of a finite subgroup of $\Gamma_{g}$ is the set of all points of $\mathcal{T}_{g}$ corresponding to hyperbolic surfaces with isometry group given by the subgroup, i.e. the subgroup fixes the set pointwise.\\

Due to the fact that $\Gamma_{g}$ acts by isometry with respect to a number of metrics, such as the Teichm\"uller metric and the Weil-Petersson metric, each connected component of a fixed point set of a finite subgroup of $\Gamma_{g}$ is a closed, totally geodesic submanifold with respect to these metrics. \\

\begin{prop}
\label{fixedpointsetprop}
Zero dimensional fixed point sets of finite subgroups of $\Gamma_{g}$ acting on $\mathcal{T}_{g}$ are critical points of every $\Gamma_{g}$-equivariant Morse function on $\mathcal{T}_{g}$. Moreover, away from the critical points, the fixed point sets of finite subgroups of $\Gamma_{g}$ intersect the level sets of every $\Gamma_{g}$-equivariant Morse function transversely.
\end{prop}

The level sets of $f_{\mathrm{sys}}$ are not smooth. The notion of transversality used in Proposition \ref{fixedpointsetprop} is given in \cite{Armstrong}. 

\begin{proof}
If a 0-dimensional fixed point set were not a critical point, since the action of $\Gamma_{g}$ maps the unit cone of increase of the Morse function to itself, Brouwer's fixed point theorem implies the existence of a vector that is fixed by the action of the subgroup. A Weil-Petersson geodesic through $p$ with this tangent vector is therefore also contained in the fixed point set, contradicting the assumption that it is 0-dimensional.\\

Suppose $p$ is contained in the fixed point set of a finite subgroup of $\Gamma_{g}$, and $p$ is not a critical point. Since the finite index subgroup maps the unit cone of increase of the Morse function at $p$ to itself, as argued above, there is a vector in the cone of increase of the Morse function at $p$ that is fixed by the action of the subgroup. A Weil-Petersson geodesic through $p$ with this tangent vector is therefore also contained in the fixed point set. This geodesic is transverse to the codimension 1 level sets of the Morse function.
\end{proof}




\subsection{Extending Thurston's flow}
\label{pgx}

In \cite{Morse}, stable and unstable manifolds of a topological Morse function were defined. In contrast to the case for Morse functions, the stable and unstable manifolds of critical points of $f_{\mathrm{sys}}$, although defined, may not be uniquely defined. Nevertheless, since $\mathcal{P}_{g}$ contains all the critical points, and is fixed by Thurston's $f_{\mathrm{sys}}$-increasing flow, it also $\Gamma_{g}$-equivariantly deformation retracts on a union $\mathcal{P}_{g}^{X}$ of some notion of the unstable manifolds of the critical points. Here $X$ is an $f_{\mathrm{sys}}$-increasing vector field defined below. The purpose of this subsection is to construct $\mathcal{P}_{g}^{X}$ and use it to define unstable manifolds.\\

In this subsection, it will be assumed that there are no boundary points of $f_{\mathrm{sys}}$. In subsection \ref{boundarypoints} it will be explained how to generalise the results of this subsection in the presence of boundary points of $f_{\mathrm{sys}}$, by altering $X$ in such a way that the only zeros occur at critical points.\\

Theorem \ref{thm1} will first be used to perform a $\Gamma_{g}$-equivariant deformation retraction of $\mathcal{P}_{g}$ onto a CW-complex $\mathcal{P}_{g}^{X}$, using the flow of a vector field $X$ on $\mathcal{P}_{g}$. This deformation retraction is the largest $\Gamma_{g}$-equivariant deformation retraction possible that does not cancel critical points, and it is constructed in such a way as to preserve the level set structure of $f_{\mathrm{sys}}$. By Theorem \ref{thm1}, the restriction of $f_{\mathrm{sys}}$ to $\mathcal{P}_{g}$ only has stationary points at critical and boundary points of $f_{\mathrm{sys}}$. Around a critical point, the local behaviour of $f_{\mathrm{sys}}$ is given by Corollary \ref{cor}.  \\

Given two points $x, x'$ of $\mathcal{T}_{g}$, $x'$ will be said to be \textit{above} $x$ if $f_{\mathrm{sys}}(x')>f_{\mathrm{sys}}(x)$. Whenever $x$ is on the boundary of a simplex $\sigma$ of $\mathcal{P}_{g}$, $\sigma$ will be said to be \textit{above} $x$ if the cone of increase of $f_{\mathrm{sys}}$ at $x$ has nonempty intersection with the tangent cone of $\sigma$ at $x$.\\

Assume a triangulation of $\mathcal{P}_{g}$ compatible with the stratification. A $\Gamma_{g}$-equivariant vector field $X$ on $\mathcal{P}_{g}$ will now be defined. The properties of $X$ will first be discussed, then listed, after that it will be proven that a well-defined vector field satisfying these conditions exists. The term flowline will be used to denote a piecewise-smooth path in $\mathcal{P}_{g}$, for which the vector field $X$ is the 1-sided limit from above of the tangent vector to the path at every point.\\

\begin{figure}[h!]
\centering
\includegraphics[width=0.3\textwidth]{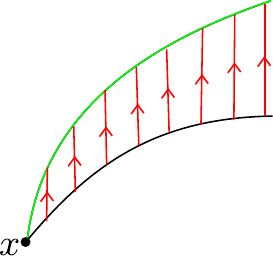}
\caption{At the points shown in green, the vector field $X$ is not determined by the vector field on the adjacent higher dimensional stratum, where the flowlines are shown in red.}
\label{sichel}
\end{figure}

By Theorem \ref{thm1}, the only zeros of $X$ on $\mathcal{P}_{g}$ occur at critical points and boundary points of $f_{\mathrm{sys}}$. At every other point $x\in\mathcal{P}_{g}$, $X(x)$ is in the tangent cone to $\mathcal{P}_{g}$ and is a direction in which $f_{\mathrm{sys}}$ is increasing. To begin with, it will be assumed that in the interior of every locally top dimensional simplex $\sigma$, $X$ is given by the gradient of $f_{\mathrm{sys}}$ restricted to $\sigma$. This condition completely determines the flowlines on $\sigma$. It also determines $X$ at every point $x$ in a lower dimensional simplex that is the endpoint of a segment of a flowline above $x$. Now suppose $x\in \mathcal{P}_{g}$ is not such a limit point, see for example Figure \ref{sichel}. If moreover $x$ is contained in the interior of a simplex $\sigma_{1}$ of the largest dimension with this property, $X(x)$ is defined to be the gradient of the restriction of $f_{\mathrm{sys}}$ to $\sigma_{1}$.\\



If necessary, $X$ can be scaled by a positive $\Gamma_{g}$-equivariant function. \\

\begin{lem}
\label{list}
When $f_{\mathrm{sys}}$ has no boundary points, there is a well-defined vector field $X$ on $\mathcal{P}_{g}$ satisfying the following properties.
\begin{enumerate}
\item $f_{\mathrm{sys}}$ is everywhere increasing in the direction of $X$, except at critical points of $f_{\mathrm{sys}}$, where $X=0$. In the interior of every locally top dimensional simplex, as well as some other points as explained above, $X$ is parallel to the gradient of the restriction of $f_{\mathrm{sys}}$ to the simplex.
\item $X$ is $\Gamma_{g}$-equivariant.
\item At every point $x\in \mathcal{P}_{g}$, $X(x)$ is in the tangent cone to $\mathcal{P}_{g}$ at $x$.
\item On a neighbourhood of a critical point of $f_{\mathrm{sys}}$, $|X(x)|\geq \sinh(d(x,p))$, where $d(x,p)$ is the Weil-Petersson distance in $\mathcal{T}_{g}$ between $x$ and $p$, and $|X(x)|$ is the Weil-Petersson length of the vector.
\end{enumerate}
\end{lem}

The goal will be to use the flow $\phi_{t}$ generated by $X$ to obtain a $\Gamma_{g}$-equivariant deformation retraction of $\mathcal{P}_{g}$ onto a CW-complex $\mathcal{P}_{g}^{X}$, which will be thought of as a piecewise-smooth analogue of the union of unstable manifolds of the critical points of $f_{\mathrm{sys}}$, defined below. Condition (4)  is included to ensure that the retraction occurs in finite time, giving a deformation retraction rather than merely a retraction. The specific choice given in Condition (4) is not essential.

\begin{proof}
Conditions (1) and (3) are possible as a result of Theorem \ref{thm1}. Condition (2) is possible because all quantities and constraints are defined in terms of $\Gamma_{g}$-equivariant quantities. Condition (4) can be achieved by rescaling with the help of a $\Gamma_{g}$-equivariant partition of unity. \\

\begin{figure}[h!]
\centering
\includegraphics[width=0.7\textwidth]{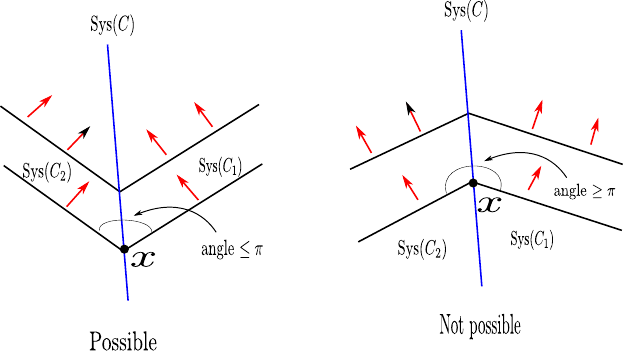}
\caption{Level sets on $\partial I(C,x)$ are shown in black, the stratum $\mathrm{Sys}(C)$ in blue and the red arrows show the direction of $X$. If the flowlines could diverge at $x\in \mathrm{Sys}(C)$ as shown on the right, this would contradict the fact that $I(C,x)$ is contained in a half-space. }
\label{singlevalued}
\end{figure}

It remains to show that the vector field defined this way is well defined. At regular points of $f_{\mathrm{sys}}$, this follows from a property of the cone of increase illustrated in Figure \ref{singlevalued}. Clearly, $X$ is well behaved in the interior of locally top dimensional simplices of $\mathcal{P}_{g}$; it remains to show that $X$ has a unique value at every point of $\mathcal{P}_{g}$ where larger dimensional strata meet along lower dimensional strata. Call one such point in a lower dimensional stratum $x$, and let $C$ be the set of systoles at $x$. Recall that the cone of increase at $x$ is given by the tangent cone at $x$ to $I(C,x)$. By local finiteness, on a neighbourhood of $x$, the level set of $f_{\mathrm{sys}}$ passing through $x$ lies along $\partial I(C,x)$. Level sets have no kinks when restricted to a stratum, so the distinct smooth pieces on $\partial I(C,x)$ determine different strata. Where $X$ is given by the gradient of the restriction of $f_{\mathrm{sys}}$ to a simplex, $X$ is orthogonal to the level sets. By construction, $I(C,x)$ is strictly contained in a half-space, so the level sets of $f_{\mathrm{sys}}$ on $\partial I(C,x)$ near $x$ cannot make more than an angle at $x$ internal to $I(C,x)$ of more than $\pi$. If $X$ were not well defined at $x$, this would give a contradiction by forcing the internal angle in $I(C,x)$ to be too large. In the left hand side of Figure \ref{singlevalued}, at $x$, the flowline is contained in the lower dimensional stratum $\mathrm{Sys}(C)$.\\

This concludes the proof of the lemma away from critical points.\\
\begin{figure}[h!]
\centering
\includegraphics[width=0.5\textwidth]{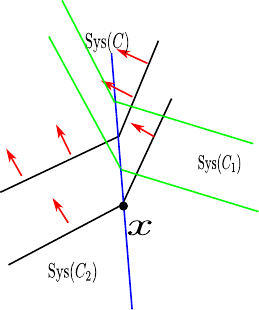}
\caption{Level sets are shown in black. Flowlines, which appear to be refracted by the lower dimensional stratum (blue), are shown in green. }
\label{refraction}
\end{figure}

\begin{rem}
\label{focusorpass}
Away from critical points and boundary points, flowlines either converge on lower dimensional strata of $\mathcal{P}_{g}$, as shown on the left in Figure \ref{singlevalued}, or pass through and are ``refracted'' by lower dimensional strata, as shown in Figure \ref{refraction}.
\end{rem}

That $X$ is well defined at a critical point of $f_{\mathrm{sys}}$ is a consequence of Corollary \ref{cor}; at such points there are no directions where $f_{\mathrm{sys}}$ is increasing to first order and consequently $X=0$.
\end{proof}



Flows defined on simplicial complexes are a standard tool in optimisation and combinatorics, but differ from flows defined on manifolds, as the existence and uniqueness theorem for ODEs is only guaranteed to hold on the interiors of simplices where the vector field is smooth. Due to the fact that $\mathcal{P}_{g}^{X}$ is obtained from a flow on a simplicial complex, rather than on a manifold where the existence and uniqueness theorem for ODEs holds everywhere, the unstable manifolds for distinct critical points might not be disjoint. \\

Suppose $\sigma_{1}$ and $\sigma_{2}$ are a pair of simplices of $\mathcal{P}_{g}$ with a common boundary simplex $\sigma_{3}$. When two flowlines originating in $\sigma_{1}$ and $\sigma_{2}$ approach the same point $x$ in $\sigma_{3}$, the two flowlines will be said to \textit{merge} at $x$, where $x$ is a \textit{merge point}. \\

\textbf{Stable and unstable manifolds.} Stable and unstable manifolds of critical points of topological Morse functions on topological manifolds were defined in \cite{Morse}, but it is important to keep in mind that these might not be unique as in the smooth case. On a topological manifold, an unstable manifold of a critical point of index $j$ is a codimension $j$ ball with the Morse function increasing away from the critical point.\\

Unstable manifolds are defined using the vector field $X$. In Subsection \ref{boundarypoints}, it will be explained how to alter this vector field in the presence of boundary points of $f_{\mathrm{sys}}$ so that the only zeros are at critical points. It is this altered vector field that should be assumed in the next definition.\\ 

An unstable manifold $\mathcal{M}(p)$ in $\mathcal{P}_{g}^{X}$ of a critical point $p$ of $f_{\mathrm{sys}}$ is defined to be the set of all $x\in \mathcal{P}_{g}^{X}$ such that
\begin{itemize}
\item{$x$ is on a flowline originating at $p$, and}
\item{$x$ is not a merge point or above a merge point with a flowline originating at a critical point $p'$, for which $p'$ is the endpoint of a flowline originating at $p$.}
\end{itemize}

\begin{figure}[!thpb]
\centering
\includegraphics[width=0.4\textwidth]{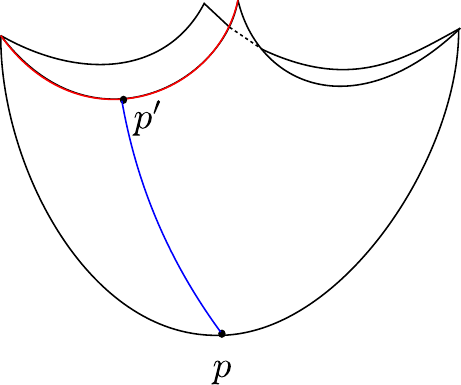}
\caption{An unstable manifold of a critical point $p$, with the unstable manifold of a critical point $p'$ shown in red, and a flowline from $p$ to $p'$ in blue. The Morse function is height.}
\label{maplefigure}
\end{figure}



Recall that the index of the critical point is the codimension of the unstable manifold, so an unstable manifold typically has unstable manifolds of critical points of higher index on its boundary. According to the definition, $\mathcal{M}(p)$ contains only one critical point $p$.

\begin{lem}
\label{flowintersections}
Suppose a critical point $p_{1}$ is the endpoint of a flowline $\gamma$ originating from the critical point $p$. If $\gamma(t_{0})$ is in $\partial \mathcal{M}(p)$ for some $t_{0}\geq 0$, then for all $t>t_{0}$, $\gamma(t)$ is in $\partial \mathcal{M}(p)$.
\end{lem}
\begin{proof}
When working with smooth Morse functions, an analogous statement follows from the observation that solutions to the differential equations defining the flowlines depend continuously on the initial conditions. For $f_{\mathrm{sys}}$, this holds on individual strata making up $\mathcal{M}(p)$, and as discussed in Remark \ref{focusorpass}, the flowlines either pass through or converge on lower dimensional strata.
\end{proof}


\begin{figure}[!thpb]
\centering
\includegraphics[width=0.3\textwidth]{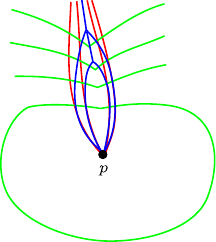}
\caption{Some level sets of $f_{\mathrm{sys}}$ in an unstable manifold $\mathcal{M}(p)$ are shown in green and two pairs of flowlines that each bound a bigon in $\mathcal{M}(p)$ are blue. The altered flowlines are shown in red.}
\label{nobigons}
\end{figure}

It will be assumed that within an unstable manifold $\mathcal{M}(p)$ there are no two flowlines that bound a bigon in $\mathcal{M}(p)$. This can be done without loss of generality by deforming $X$ if necessary within an orbit of unstable manifolds. This alteration can be done equivariantly and involves adding a small component to $X$ tangent to level sets, as illustrated in Figure \ref{nobigons}.\\

\begin{figure}[!thpb]
\centering
\includegraphics[width=0.7\textwidth]{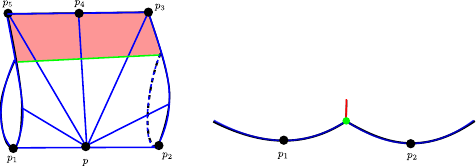}
\caption{Two schematic examples of flaps, shown in red. Flowlines are shown in blue, and merge points in green. Critical points are black dots.}
\label{breakapartflap}
\end{figure}

The next lemma will be assumed from now on without reference.

\begin{lem}
Flowlines can merge, but not split up.
\label{lem14}
\end{lem}
\begin{proof}
The vector field $X$ was defined as the 1-sided limit from above of the tangent to the flowlines. The lemma clearly holds within strata on which $X$ is a gradient. Suppose, as illustrated in Figure \ref{nosplit}, there are two or more strata $\sigma_{1}$ and $\sigma_{2}$ of $\mathcal{P}_{g}$ at $x$ that have the same limiting value of $X$ from above, where this limiting value of $X$ is in the tangent cones to $\sigma_{1}$ and $\sigma_{2}$ at $x$. It remains to show that the flowline through $x$ does not split apart.\\

\begin{figure}[!thpb]
\centering
\includegraphics[width=0.3\textwidth]{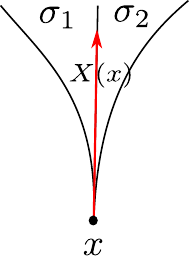}
\caption{Two strata, $\sigma_{1}$ and $\sigma_{2}$ of $\mathcal{P}_{g}$, are shaded in red and blue. At $x$, both these strata have the same limiting value from above of the vector field $X$, and this limiting value is in the tangent cones to both $\sigma_{1}$ and $\sigma_{2}$.}
\label{nosplit}
\end{figure}

Since the point $x$ is not a critical or boundary point of $f_{\mathrm{sys}}$, by Theorems \ref{thm1} and \ref{thm2}, the intersection of the unit cone of increase at $x$ and the tangent cone to $\mathcal{P}_{g}$ at $x$ is connected. Instead of splitting up, flowlines converge on a stratum between $\sigma_{1}$ and $\sigma_{2}$. If there is no stratum between $\sigma_{1}$ and $\sigma_{2}$, the flowlines converge on whichever of $\sigma_{1}$ and $\sigma_{2}$ has the smaller dimension at points directly above $x$.
\end{proof}

A connected component of $\mathcal{M}(p)$ consisting of merge points and the segments of flowlines above the merge points will be called a \textit{flap}. Two examples are shown schematically in Figure \ref{breakapartflap}. Flaps do not contain critical points. Up to homotopy equivalence, the existence of flaps does not change the topology of $\mathcal{P}_{g}^{X}$. There is a homotopy equivalence from $\mathcal{P}_{g}^{X}$ to a complex without flaps, constructed by collapsing to a point the segments of flowlines in the flaps.

\begin{lem}
\label{celllem}
In the absence of flaps, $\mathcal{M}(p)$ is an open cell.
\end{lem}
\begin{proof}
This follows from the same argument as Lemma \ref{flowintersections}.
\end{proof}

Assuming no merge points in the interior of $\mathcal{M}(p)$, an unstable manifold $\mathcal{M}(p)$ of a critical point $p$ of $f_{\mathrm{sys}}$ of index $j$ is a codimension $j$ ball on which $f_{\mathrm{sys}}$ increases away from $p$. By Lemma \ref{flowintersections}, $\mathcal{M}(p)$ has boundary consisting of a union of flowlines on $\mathcal{M}(p)$ in the unstable manifolds of other critical points. Flowlines in $\mathcal{M}(p)$ originate at $p$, and end on a critical point on the boundary of $\mathcal{M}(p)$ or merge with a flowline contained in the boundary of $\mathcal{M}(p)$.

\subsection{Boundary points of the systole function}
\label{boundarypoints}
The purpose of this subsection is to present what is known about the structure of boundary points of the systole function, and to argue that the vector field $X$ can be altered slightly on a neighbourhood of such points to remove the zeros at boundary points, without creating zeros elsewhere.\\

To understand what a boundary point of $f_{\mathrm{sys}}$ is and how it relates to a critical point, it helps to introduce the notion of sets of minima, as defined in \cite{SchmutzMorse}. Choose and fix an ordered set $C$ of curves. Recall that for every fixed $A\in \mathbb{R}_{+}^{C}$ the length function $L(A,C)$ is convex. Lemma 1 of \cite{MorseSmale} shows that if $C$ fills, there is a unique point in $\mathcal{T}_{g}$ at which $L(A,C)$ has its minimum. The set of minima $\mathrm{Min}(C)$ is the set of all such points of $\mathcal{T}_{g}$ at which minima occur, when $A$ ranges over all tuples in $\mathbb{R}_{+}^{C}$.\\

A boundary point of $f_{\mathrm{sys}}$ is a regular point (i.e. noncritical point) at which the cone of increase is not full, i.e. there is no direction in which $f_{\mathrm{sys}}$ is increasing to first order. This is a phenomenon that one does not see with smooth Morse functions. In \cite{SchmutzMorse} it was shown that a boundary point of $f_{\mathrm{sys}}$ with set of systoles $C$ is 
\begin{enumerate}
\item{isolated,}
\item{contained in $\mathcal{P}_{g}$, and}
\item{is a point at which $\mathrm{Sys}(C)$ intersects $\overline{\mathrm{Min}(C)} \setminus \mathrm{Min}(C)$.}
\end{enumerate}
The third property is presumably the reason for the name boundary point of $f_{\mathrm{sys}}$. Property (3) should be compared with the defining property of a critical point with set of systoles $C$, which was shown in \cite{Akrout} to be a point at which $\mathrm{Sys}(C)$ intersects $\mathrm{Min}(C)$.\\

The local structure of $f_{\mathrm{sys}}$ around boundary points was studied in Section 5 of \cite{MorseSmale}. It follows from point (3) above, together with Lemma 14 of \cite{SchmutzMorse}, that $p$ is contained in $\mathrm{Min}(C')$ with $C'\subsetneq C$.\\

The alteration to the vector field $X$ will be motivated by describing a boundary point as a point that arises when two critical points of index differing by 1 are cancelled. Readers interested only in proofs can skip the next two paragraphs. Suppose $\lambda>1$, and note that when the convex function $L(c)$ is replaced by the convex function $\lambda L(c)$ in the definition of $\mathrm{Min}(C)$, the same set of minima is obtained. When $C\setminus C'$ contains a single curve $c$, define $f(\lambda)_{\mathrm{sys}}:\mathcal{T}_{g}\rightarrow \mathbb{R}_{+}$ to be the function whose value at $x$ is given by
\begin{equation*}
\min\{L(c')(x), \lambda L(c)(x)\ |\ c'\neq c\text{ is a curve on }S_{g}\}
\end{equation*}
Since $\lambda L(c)$ is a convex function on $\mathcal{T}_{g}$, one can show that this implies $f(\lambda)_{\mathrm{sys}}$ is also a topological Morse function on $\mathcal{T}_{g}$, see for example \cite{convexMorse}. Theorem 15 of \cite{SchmutzMorse} implies there is a derivation $v$ in the tangent cone to $\mathrm{Min}(C)$ at $p$ with the property that the evaluation of $v$ on $\lambda L(c)$ is negative and the evaluation on $L(c')$ for every $c'\in C'$ is zero. Consequently, for $\lambda$ sufficiently close to $1$, $f(\lambda)_{\mathrm{sys}}$ has two critical points, $p_{1}$ and $p_{2}$. The points $p_{1}$ and $p_{2}$ are close together, their indexes differ by 1, and there is a flowline from $p_{1}$ to $p_{2}$. One of these critical points, call it $p_{1}$, is the same as the point $p\in \mathrm{Min}(C')$, and $p_{2}\in \mathrm{Min}(C)$ (with smaller index) is in $\mathrm{Sys}(\{c\})$ and hence not in $\mathcal{P}_{g}$. In the limit as $\lambda$ approaches 1, the critical points cancel to give the boundary point of $f_{\mathrm{sys}}$. \\

\begin{figure}
\centering
\includegraphics[width=\textwidth]{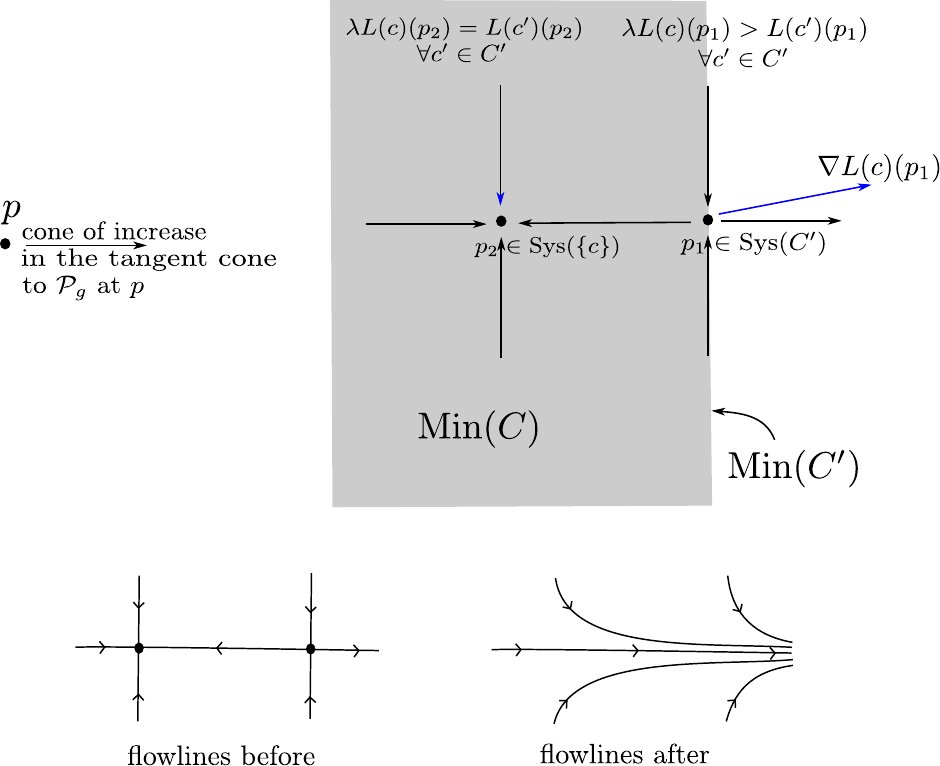}
\caption{The top left figure shows the boundary point $p$ and its cone of increase, which is in the tangent cone to $\mathcal{P}_{g}$ at $p$. To the right of that are the critical points of $f(\lambda)_{\mathrm{sys}}$. Below are the flowlines of $f(\lambda)_{\mathrm{sys}}$ before and after the critical points were cancelled.}
\label{criticalcancellation}
\end{figure}

Theorem 5.4 of \cite{hcobordism} describes the cancellation of the smooth analogues of critical points $p_{1}$ and $p_{2}$ by altering the vector field on a neighbourhood of the flowline between them, as shown in Figure \ref{criticalcancellation}. In local coordinates, the cancellation is achieved by altering the function in such a way that a vector field parallel to the flowline in the direction of $p_{1}$ is added to the gradient at points near the flowline between $p_{1}$ and $p_{2}$. This gives a vector field that agrees with the vector field outside of a neighbourhood of the flowline, and without any zeros within the neighbourhood. The same will now be done to the vector field $X$.\\

Choose $\epsilon>0$ small. Let $\phi$ be a smooth function equal to 1 on the $\frac{\epsilon}{2}$ neighbourhood of $p$, and zero outside of the $\epsilon$-neighbourhood of $p$. Add to $X$ the projection of $\phi (\lambda-1)\nabla\sum_{c\in C\setminus C'}L(c)$ to the tangent cone of $\mathcal{P}_{g}$. \\

\begin{lem}
For sufficiently small $\epsilon$ and sufficiently small $\lambda>1$, the vector field obtained by adding to $X$ the projection of $\phi (\lambda-1)\nabla\sum_{c\in C\setminus C'}L(c)$ to the tangent cone of $\mathcal{P}_{g}$ is nowhere vanishing on the support of $\phi$. 
\end{lem}
\begin{proof}
By Corollary \ref{cor}, the cone of increase at $p$ is contained in the tangent cone to $\mathcal{P}_{g}$. On the strata with tangent cone at $p$ given by the cone of increase at $p$, for small enough $\epsilon$, the altered vector field is nonvanishing on the support of $\phi$. This is because, as shown in Section 5 of \cite{MorseSmale}, the cone of increase at $p$ consists of the vectors in $\{\nabla L(c')(p)\ |\ c'\in C'\}^{\perp}$ with positive inner product with the gradient of the length of every curve in $C\setminus C'$.\\

At $p$, there might also be some strata into which $f_{\mathrm{sys}}$ is decreasing away from $p$ to first order. As long as $\lambda>1$ is close enough to 1 and $\epsilon$ is sufficiently small, the alteration does not create zeros in these strata either.
\end{proof}

The alteration to $X$ can be done in a $\Gamma_{g}$-equivariant way, by using the action of $\Gamma_{g}$ to map the choices to different boundary points in the same orbit. 
\subsection{Unmatched faces}
\label{unmatchedfaces}
The boundary of a simplicial complex consists of a union of unmatched faces, where an unmatched face is a facet of a locally top-dimensional simplex, that is only on the boundary of one locally top-dimensional simplex. As a set, the boundary of the cell complex $\mathcal{P}_{g}$ does not depend on how it is subdivided into a simplicial complex. This is a consequence of the subdivision theorems of Chapter 3.3 of \cite{Spanier}. The purpose of this subsection is to prove the following lemma.

\begin{lem}
\label{nonemptyboundary}
The CW complex $\mathcal{P}_{g}^{X}$ has nonempty boundary whenever $\mathcal{P}_{g}^{X}$ has dimension greater than $4g-5$.
 \end{lem}
\begin{proof}
Let $q$ be a point of $\mathcal{P}_{g}$ in the interior of a locally top-dimensional simplex $\sigma$. By Remark 7 of \cite{Me2023} the pre-image of $q$ under the deformation retraction is a ball $\mathcal{B}(q)$ with empty boundary, intersecting $\mathcal{P}_{g}$ in the single point $q$ and with dimension equal to the codimension of $\sigma$ in $\mathcal{T}_{g}$ at $q$. As $\mathcal{P}_{g}^{X}$, just like $\mathcal{P}_{g}$, is obtained as the image of an equivariant deformation retraction generated by the flow of an $f_{\mathrm{sys}}$-increasing vector field, the argument in Remark 7 of \cite{Me2023} holds with $\mathcal{P}_{g}^{X}$ in place of $\mathcal{P}_{g}$. Consequently, for a point $q$ in the interior of a locally top-dimensional cell of $\mathcal{P}_{g}^{X}$, there is a ball $\mathcal{B}(q)$ obtained as the pre-image of $q$ under the deformation retraction, with codimension equal to the dimension of $\mathcal{P}_{g}^{X}$ at $q$, intersecting $\mathcal{P}_{g}^{X}$ only in the point $q$.\\

The ball $\mathcal{B}(q)$ intersects $\mathcal{T}_{g}^{\epsilon_{M}}$ in a connected set, that separates each flowline in $\mathcal{B}(q)$ into the segment on which $f_{\mathrm{sys}}>\epsilon_{M}$ and the segment on which $f_{\mathrm{sys}}<\epsilon_{M}$. The boundary of $\mathcal{T}_{g}^{\epsilon_{M}}$ is piecewise smooth, and $\mathcal{B}(q)$ intersects each of the top dimensional pieces in such a way that the inward facing normal to each piece makes an angle with the tangent space to $\mathcal{B}(q)$ that is bounded away from $\pi/2$, i.e. $\mathcal{B}(q)$ intersects $\partial\mathcal{T}_{g}^{\epsilon_{M}}$ transversely. The intersection of $\mathcal{B}(q)$ with $\partial\mathcal{T}_{g}^{\epsilon_{M}}$ is therefore a sphere $\mathcal{S}^{\epsilon_{M}}(q)$ of dimension 1 less than the codimension of $\mathcal{M}(p)$ in $\mathcal{T}_{g}$. \\

The dimension of $\mathcal{M}(p)$ cannot be less than $4g-5$, as this is the virtual cohomological dimension of $\Gamma_{g}$, and gives a lower bound on the dimension of any spine. Assume the dimension of $\mathcal{M}(p)$ is greater than $4g-5$. In this case, $\mathcal{S}^{\epsilon_{M}}(q)$ has dimension less than $2g-2$. Since $\partial \mathcal{T}_{g}^{\epsilon_{M}}$ is $\Gamma_{g}$-equivariantly homotopy equivalent to a wedge of spheres $\vee^{\infty}S^{2g-2}$, \cite{Ivanov2002}, $\mathcal{S}^{\epsilon_{M}}(q)$ is contractible in $\partial \mathcal{T}_{g}^{\epsilon_{M}}$. Moreover $\mathcal{B}(q)\cap \mathcal{T}_{g}^{\epsilon_{M}}$ can be homotoped relative to its boundary into $\partial\mathcal{T}_{g}^{\epsilon_{M}}$.\\

The homotopy of $\mathcal{B}(q)\cap \mathcal{T}_{g}^{\epsilon_{M}}$ into $\partial\mathcal{T}_{g}^{\epsilon_{M}}$ moves the point $q$ off $\mathcal{P}_{g}^{X}$. This implies that $\mathcal{P}_{g}^{X}$ has nonempty boundary, as required.
\end{proof}

\subsection{Orbits of elementary deformation retractions}
\label{construction}
This subsection uses the topology of the boundary of the thick part of $\mathcal{T}_{g}$ and the results of the previous subsections to prove the next theorem.

\begin{thm}
There is a mapping class group-equivariant deformation retraction of the Thurston spine of a closed orientable surface of genus $g$ onto a CW-complex of dimension equal to $4g-5$.
\end{thm}
\begin{proof}
The theorem will first be proven under the assumption that there are no flaps, and then generalised.\\

It is necessary to understand the way fixed point sets of $\Gamma_{g}$ intersect the level sets within unstable manifolds of critical points, to show that they do not obstruct the construction of a $\Gamma_{g}$-equivariant deformation retraction. A fundamental domain of the action of $\Gamma_{g}$ on $\mathcal{M}(p)$ will be chosen.\\

Denote by $G(p)$ the subgroup of $\Gamma_{g}$ that stabilises $\mathcal{M}(p)$ as a set, not necessarily pointwise. The group $G(p)$ is finite as a consequence of Fricke's theorem. \\

By Lemma \ref{celllem}, in the absence of flaps, $\mathcal{M}(p)$ is an open cell. By Proposition \ref{fixedpointsetprop}, all fixed point sets intersect the unstable manifold in a critical point or are transverse to the level sets both of $\mathcal{M}(p)$ and of the unstable manifolds on the boundary of $\mathcal{M}(p)$. \\

\begin{figure}[!thpb]
\centering
\includegraphics[width=0.4\textwidth]{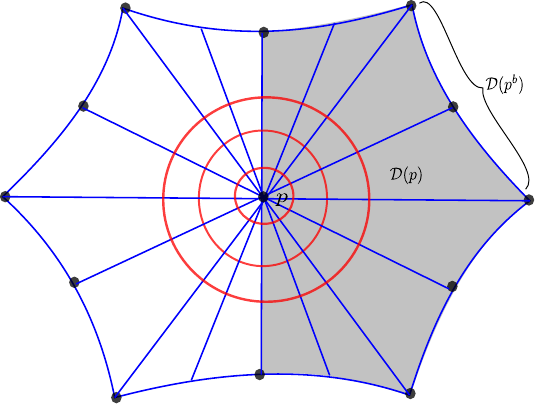}
\caption{A fundamental domain (shaded) of the action of $G(p)$ on the closure of $\mathcal{M}(p)$. Flowlines are shown in blue, level sets in red, and dots are critical points.}
\label{fundamentaldomain}
\end{figure}

\textbf{Constructing fundamental domains.} As the flow used to construct $\mathcal{P}_{g}^{X}$ is $\Gamma_{g}$-equivariant, $G(p)$ maps flowlines to flowlines. Moreover, $G(p)$ maps critical points to critical points of the same index, unstable manifolds to unstable manifolds, level sets to level sets, and fixes $p$. The fixed point set of any subgroup of $G(p)$ therefore contains $p$. \\

If a flowline $\gamma(t)$ is tangent to a fixed point set at time $t_{0}$, it is tangent at all times $t>t_{0}$ in its domain, otherwise the flow would not be $\Gamma_{g}$-equivariant, or would violate Lemma \ref{lem14}. This same argument shows that the assumption of no flaps ensures that the only fixed point sets of $\Gamma_{g}$ that intersect the interior of $\mathcal{M}(p)$ are (pointwise) fixed point sets of subgroups of $G(p)$. A fundamental domain of the action of $G(p)$ on the closure of $\mathcal{M}(p)$ can therefore be chosen to be invariant under the flow on $\mathcal{M}(p)$. The boundary of $\mathcal{M}(p)$ is contained in unstable manifolds of critical points distinct from $p$. The assumption of no flaps ensures that by Lemma \ref{flowintersections}, every unstable manifold $\mathcal{M}(p_{1})$ containing boundary points of $\mathcal{M}(p)$ either is contained in $\partial\mathcal{M}(p)$, or $\partial\mathcal{M}(p)\cap \mathcal{M}(p_{1})$ consists of a union of flowlines and the critical point $p_{1}$.\\

Denote by $p^{b}$ a critical point on $\partial \mathcal{M}(p)$ for which $\mathcal{M}(p^{b})$ contains boundary points of $\mathcal{P}_{g}^{X}$ on $\partial \mathcal{M}(p)$.\\

To make the deformation retraction $\Gamma_{g}$-equivariant, all deformation retractions coming from $\Gamma_{g}$-orbits of $\mathcal{M}(p^{b})$ will be performed simultaneously. \\


When more than one element of the $\Gamma_{g}$-orbit of $\mathcal{M}(p^{b})$ contains an unmatched face of $\mathcal{M}(p)$, there is a nontrivial finite subgroup $G(p)$ of $\Gamma_{g}$ that fixes $\mathcal{M}(p)$. A fundamental domain $\mathcal{D}(p)$ of the action of $G(p)$ on $\mathcal{M}(p)$ is constructed as explained above. A fundamental domain of $\mathcal{M}(p^{b})\cap \partial\mathcal{M}(p)$ under the action of the subgroup of $G(p)$ that stabilises $\mathcal{M}(p^{b})\cap \mathcal{M}(p)$ will be denoted by $\mathcal{D}(p^{b})$. Here it is assumed that $\mathcal{D}(p^{b})$ is on the boundary of $\mathcal{D}(p)$. \\

To construct a $\Gamma_{g}$-equivariant deformation retraction, it suffices to consider the following two cases:
\begin{enumerate}
\item{Every point in the interior of $\mathcal{D}(p^{b})$ is on $\partial\mathcal{P}_{g}^{X}$}
\item{Every point in the interior of $\mathcal{D}(p^{b})$ is on $\partial\mathcal{P}_{g}^{X}$, except for some flowlines on the boundary of a finite number of unstable manifolds of dimension lower than $\mathcal{M}(p)$.}
\end{enumerate}
That these two cases suffice follows from the fact that there are only finitely many critical points modulo the action of $\Gamma_{g}$, from the local model of the neighbourhood of a critical point given in Corollary \ref{cor} and from Lemma \ref{flowintersections}.\\

\textbf{Case 1.} How to construct the retraction in Case 1 will now be explained. This is illustrated schematically in Figure \ref{fundamentaldomain}.\\

Claim - A $\Gamma_{g}$-equivariant deformation retraction of $\mathcal{P}_{g}^{X}$ is obtained by deleting the $\Gamma_{g}$-orbits of the interiors of each of $\mathcal{D}(p^{b})$ and $\mathcal{D}(p)$.\\

To prove the claim, there are two issues to address. First of all, the possibility that the stabiliser subgroup $G(p^{b})$ of $\mathcal{M}(p^{b})\cap \partial \mathcal{M}(p)$ is not contained in $G(p)$ was deliberately ignored. If $G(p^{b})$ were not contained in $G(p)$, the point $q$ of the ball $\mathcal{B}(q)$ could not have been homotoped off $\mathcal{P}_{g}$ through $\mathcal{M}(p^{b})$, because $\mathcal{M}(p^{b})$ would then be on the boundary of both $\mathcal{M}(p)$ and the image of $\mathcal{M}(p)$ under $G(p^{b})$.\\

Secondly, within $\mathcal{D}(p)$, the deformation retraction is achieved by first collapsing the flowlines in $\mathcal{D}(p)$ with endpoints on $\mathcal{D}(p^{b})$, retracting along level sets, and then collapsing any remaining pieces of flowlines, as shown in Figure \ref{fundamentaldomain}. This concludes the proof of the claim.\\

\begin{figure}[!thpb]
\centering
\includegraphics[width=\textwidth]{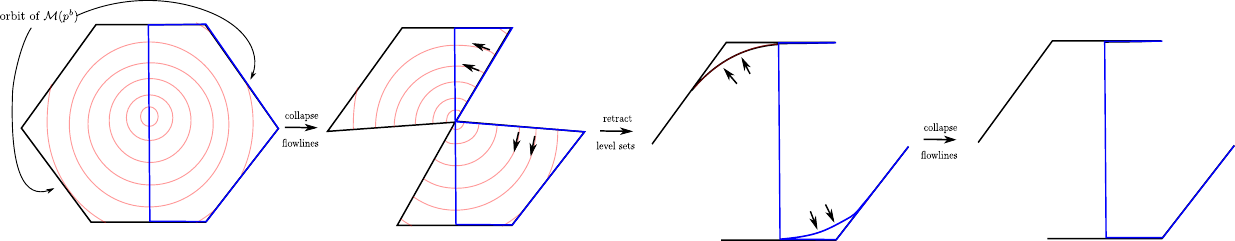}
\caption{To construct the deformation retraction, collapse the flowlines in $\mathcal{D}(p)$ with endpoints on $\mathcal{D}(p^{b})$, retract the level sets, then collapse any remaining flowlines. On the left, a fundamental domain is outlined in blue, and the level sets in red. The figures on the right show the steps of the deformation retraction.}
\label{fundamentaldomain}
\end{figure}

\textbf{Case 2.} Choose a ``wedge'' within $\mathcal{D}(p^{b})$ as illustrated in Figure \ref{wedgechoice}. This is a choice of connected component that is obtained when $\mathcal{D}(p^{b})$ is cut along the boundaries of lower dimensional unstable manifolds. A wedge is foliated by level sets of $f_{\mathrm{sys}}$, and by Lemma \ref{flowintersections} the boundary of the wedge is a union of flowlines.\\

\begin{figure}[!thpb]
\centering
\includegraphics[width=0.4\textwidth]{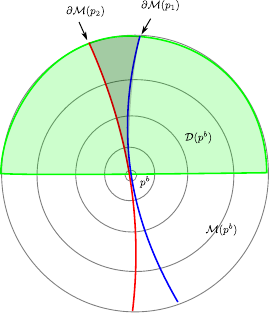}
\caption{In Case 2, $\mathcal{M}(p^b)$ is shown with the boundaries of two lower dimensional unstable manifolds $\mathcal{M}(p_{1})$ and $\mathcal{M}(p_{2})$ shown in blue and red. $\mathcal{D}(p^{b})$ is shaded in green, and a choice of ``wedge'', i.e. a connected component of $\mathcal{D}(p^{b})$ in the complement of $\overline{\mathcal{M}(p_{1})}$ and $\overline{\mathcal{M}(p_{2})}$ is shaded in darker green. }
\label{wedgechoice}
\end{figure}

Claim - A $\Gamma_{g}$-equivariant deformation retraction of $\mathcal{P}_{g}^{X}$ is obtained by deleting the $\Gamma_{g}$-orbit of the wedge and the $\Gamma_{g}$-orbit of the interior of $\mathcal{D}(p)$.\\

To prove the claim, construct a deformation retraction by retracting the level sets of $f_{\mathrm{sys}}$ in $\mathcal{D}(p)$ adjacent to the level sets in the wedge. Then collapse any remaining subintervals of flowlines in $\mathcal{D}(p)$ that passed through the retracted level sets. Once again retract any remaining portions of level sets of $f_{\mathrm{sys}}$ in $\mathcal{D}(p)$, and collapse any remaining subintervals of flowlines in $\mathcal{D}(p)$ that passed through the retracted level sets.\\

This concludes the proof of the theorem in the absence of flaps.\\

\textbf{Case with flaps.} The construction of a $\Gamma_{g}$-equivariant deformation retraction of $\mathcal{P}_{g}^{X}$ in the presence of flaps will now be discussed.\\

The presence of flaps allows for the possibility of a finite subgroup or subgroups of $\Gamma_{g}$, not contained in $G(p)$, whose fixed point set intersects the interior of $\mathcal{M}(p)$. These are fixed point sets of groups that permute the unstable manifolds that meet along a flap, as illustrated in Figure \ref{twistflap}. As before, if a flowline $\gamma(t)$ is tangent to one such fixed point set at time $t_{0}$, it is tangent at all times $t>t_{0}$ in its domain. When $t_{0}$ with this property is chosen as small as possible, $\gamma(t_{0})$ is a merge point, not a critical point. \\

As above, the fact that $\Gamma_{g}$ maps flowlines to flowlines  makes it possible to construct a fundamental domain of the action of $G(p)$ on $\mathcal{M}(p)$ with the property that the fundamental domain is a connected component obtained by cutting $\mathcal{M}(p)$ along flowlines. \\

\begin{figure}[!thpb]
\centering
\includegraphics[width=0.4\textwidth]{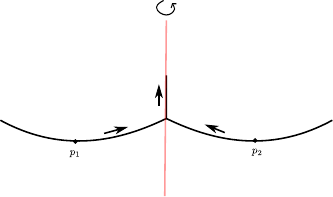}
\caption{Suppose $\mathcal{M}(p_{1})$ and $\mathcal{M}(p_{2})$ intersect along a flap, as shown. It is possible that there is a finite subgroup $G(p_{1},p_{2})$ of $\Gamma_{g}$ that  interchanges the critical points $p_{1}$ and $p_{2}$. In this case, the fixed point set of $G(p_{1}, p_{2})$ shown in red can intersect both $\mathcal{M}(p_{1})$ and $\mathcal{M}(p_{2})$, even though $G(p_{1},p_{2})$ is not contained in $G(p_{1})$ or $G(p_{2})$.}
\label{twistflap}
\end{figure}

When constructing the deformation retraction of the $\Gamma_{g}$-orbit of $\mathcal{M}(p)$, first construct the fundamental domain $\mathcal{D}(p)$ as above. The construction is then broken up into the following cases.

\begin{enumerate}
\item{$\mathcal{D}(p^{b})$ is on $\partial \mathcal{P}_{g}^{X}$ except for the points that are merge points of flaps. This is illustrated in Figure \ref{simpleflap}.}
\item{$\mathcal{D}(p^{b})$ is partially on $\partial \mathcal{P}_{g}^{X}$ and partially on $\partial \mathcal{M}(p_{2})$ where the dimension of $\mathcal{M}(p_{2})$ is equal to the dimension of $\mathcal{M}(p)$. This is illustrated in Figure \ref{doubleflap}.}
\item{$\mathcal{D}(p^{b})$ is only on $\partial \mathcal{P}_{g}^{X}$ where it lies on the boundary of a flap or flaps. This is illustrated in Figure \ref{tent}.}
\item{$\mathcal{D}(p^{b})$ is on $\partial \mathcal{P}_{g}^{X}$, both on the boundary of a flap and on $\mathcal{M}(p)\setminus \text{flaps}$. This is illustrated in Figure \ref{rolledup}}
\item{A combination of the previous case with cases 1 and 2 from the flapless discussion above.}
\end{enumerate}

\textbf{Case 1.} The image of the deformation retraction is obtained by deleting the $\Gamma_{g}$-orbit of $\mathcal{D}(p^{b})\setminus \text{flaps}$ and the $\Gamma_{g}$-orbit of the interior of $\mathcal{D}(p)$ from $\mathcal{P}_{g}^{X}$. As illustrated in Figure \ref{simpleflap}, this deformation retraction is achieved by generalising the steps from Case 1 above.\\

\begin{figure}
\centering
\includegraphics[width=0.8\textwidth]{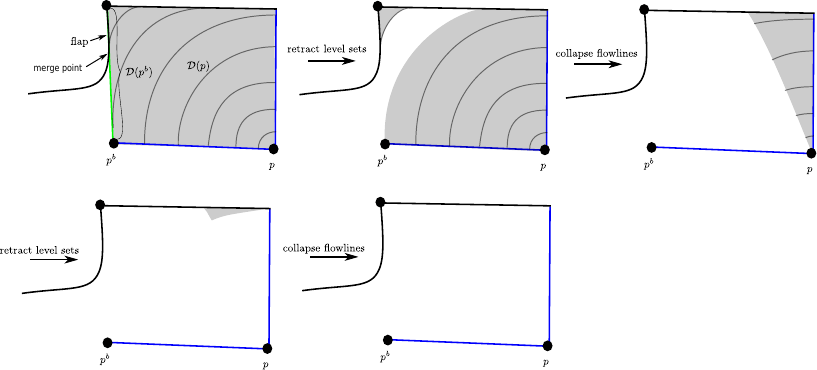}
\caption{The construction of the deformation retraction in Case 1. The boundary of $\mathcal{P}_{g}^{X}$ is shown in green.}
\label{simpleflap}
\end{figure}

\textbf{Case 2.} In this case, the point $q$ could just as well have been homotoped off $\mathcal{P}_{g}^{X}$ through the boundary of an unstable manifold near $\mathcal{M}(p)$. This unstable manifold is denoted by $\mathcal{M}(p_{2})$ in Figure \ref{doubleflap}. To construct the deformation retraction, consider $\mathcal{M}(p_{2}^{b})$ and $\mathcal{M}(p_{2})$ in place of $\mathcal{M}(p^{b})$ and $\mathcal{M}(p)$.\\

\begin{figure}[!thpb]
\centering
\includegraphics[width=0.6\textwidth]{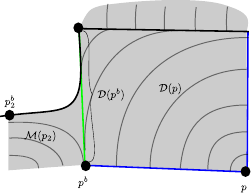}
\caption{An unstable manifold $\mathcal{M}(p_{2})$ adjacent to $\mathcal{M}(p)$ in Case 2. }
\label{doubleflap}
\end{figure}

\textbf{Case 3.} In this case, the point $q$ was chosen to be in a flap of the type depicted in Figure \ref{tent}. It is possible to perform a deformation retraction collapsing the subintervals of flowlines in the intersection of the flap. This does not reduce the dimension of $\mathcal{P}_{g}^{X}$, so if $\mathcal{P}_{g}^{X}$ has dimension greater than $4g-5$, there must be a choice for $q$ that does not give this case. \\

Whenever a deformation retraction creates flowlines whose initial point is not on a critical point, a deformation retraction is performed by collapsing sub-intervals of flowlines above the initial points. This preserves the property that the dimensions of any flaps are no larger than the dimensions of the remnants of the unstable manifolds in which they are contained. \\

\begin{figure}[!thpb]
\centering
\includegraphics[width=0.6\textwidth]{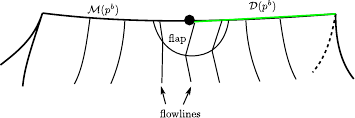}
\caption{An example in which $\mathcal{D}(p^{b})$ (shown in green) is only on $\partial \mathcal{P}_{g}^{X}$ where it lies on the boundary of a flap or flaps.}
\label{tent}
\end{figure}

\textbf{Case 4.} In this case, a flap can occur where flowlines within the same unstable manifold merge, as illustrated in Figure \ref{rolledup}. The unstable manifold might not be contractible. The image of the deformation retraction is obtained by deleting the $\Gamma_{g}$-orbit of $\mathcal{D}(p^{b})\setminus \text{flaps}$ and the $\Gamma_{g}$-orbit of the interior of $\mathcal{D}(p)\setminus \text{flaps}$ from $\mathcal{P}_{g}^{X}$.  As illustrated in Figure \ref{rolledup}, this deformation retraction is achieved by repeatedly retracting level sets and collapsing segments of flowlines.\\

\begin{figure}[!thpb]
\centering
\includegraphics[width=\textwidth]{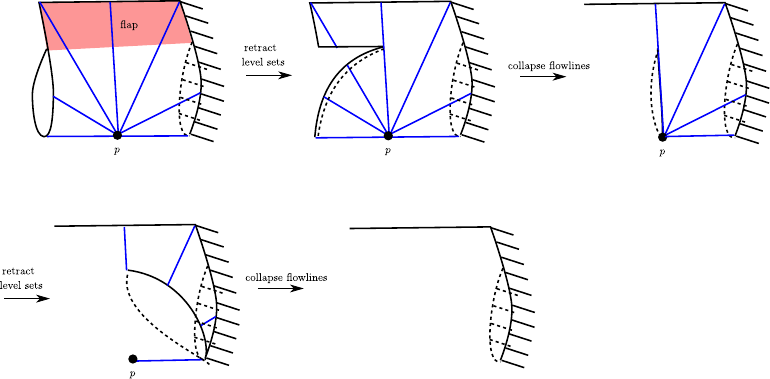}
\caption{When flowlines within $\mathcal{M}(p)$ merge, $\mathcal{M}(p)$ is not contractible. A schematic representation of the deformation retraction in Case 4 is shown.}
\label{rolledup}
\end{figure}

\textbf{Case 5.} In this case, the deformation retractions are obtained by generalising in straightforward ways the constructions described for the previous cases.\\

There is some choice involved in the construction of $\mathcal{D}(p)$. Boundary cells of $\mathcal{D}(p)$ not lying completely along fixed point sets of finite subgroups of $\Gamma_{g}$ are not uniquely determined. Any such boundary cells that remain after the deformation retraction are not stabilised by any subgroups of $\Gamma_{g}$. As it will not be necessary to worry about subdividing these into fundamental domains in later steps, the choices are not important. Moreover, each of the deformation retractions described above preserves the properties of flowlines and level sets required for the construction. This argument can therefore be iterated. The iterations must terminate after finitely many steps, as there are only finitely many orbits of critical points and strata.
\end{proof}

\bibliography{Spinebib}
\bibliographystyle{plain}

\end{document}